\documentclass[reqno, 12pt]{amsart}
\pdfoutput=1
\makeatletter
\let\origsection=\section \def\section{\@ifstar{\origsection*}{\mysection}} 
\def\mysection{\@startsection{section}{1}\z@{.7\linespacing\@plus\linespacing}{.5\linespacing}{\normalfont\scshape\centering\S}}
\makeatother        

\usepackage{amsmath,amssymb,amsthm}
\usepackage{mathrsfs}
\usepackage{mathabx}\changenotsign
\usepackage{dsfont}

\usepackage{todonotes}
\usepackage{verbatim}
 
\usepackage{graphicx}
\usepackage{xcolor}
\usepackage{tikz}
\usetikzlibrary{arrows}

\usepackage{hyperref}
\hypersetup{
    colorlinks,
    linkcolor={red!60!black},
    citecolor={green!60!black},
    urlcolor={blue!60!black},
    pdftitle={Infinite thin linear equation systems},
    pdfauthor={J. Pascal Gollin and Attila Jo\'{o}}
}

\usepackage[open,openlevel=2,atend]{bookmark}

\usepackage[maxbibnames=99]{biblatex}
 \addbibresource{/Users/bav9895/Dropbox/tex/bibfile.bib}

\usepackage[T1]{fontenc}
\usepackage{lmodern}
\usepackage[babel]{microtype}
\usepackage[english]{babel}

\linespread{1.3}
\usepackage{geometry}
\geometry{left=27.5mm,right=27.5mm, top=25mm, bottom=25mm}
\numberwithin{equation}{section}
\numberwithin{figure}{section}

\usepackage{enumitem}

\let\polishlcross=\l
\def\l{\ifmmode\ell\else\polishlcross\fi}

\def\paragraph#1{%
  \noindent\textbf{#1.}\enspace}

\let\emptyset=\varnothing
\let\setminus=\smallsetminus

\makeatletter
\def\moverlay{\mathpalette\mov@rlay}
\def\mov@rlay#1#2{\leavevmode\vtop{   \baselineskip\z@skip \lineskiplimit-\maxdimen
   \ialign{\hfil$\m@th#1##$\hfil\cr#2\crcr}}}
\newcommand{\charfusion}[3][\mathord]{
    #1{\ifx#1\mathop\vphantom{#2}\fi
        \mathpalette\mov@rlay{#2\cr#3}
      }
    \ifx#1\mathop\expandafter\displaylimits\fi}
\makeatother

\DeclareFontFamily{U}  {MnSymbolC}{}
\DeclareSymbolFont{MnSyC}         {U}  {MnSymbolC}{m}{n}
\DeclareFontShape{U}{MnSymbolC}{m}{n}{
    <-6>  MnSymbolC5
   <6-7>  MnSymbolC6
   <7-8>  MnSymbolC7
   <8-9>  MnSymbolC8
   <9-10> MnSymbolC9
  <10-12> MnSymbolC10
  <12->   MnSymbolC12}{}
\DeclareMathSymbol{\powerset}{\mathord}{MnSyC}{180}

\let\epsilon=\varepsilon

\let\rho=\varrho
\let\theta=\vartheta
\let\phi=\varphi

\theoremstyle{plain}
\newtheorem{thm}{Theorem}[section]
\newtheorem{theorem}[thm]{Theorem}
\newtheorem{lemma}[thm]{Lemma}

\newtheorem*{claim*}{Claim}

\newtheorem{thm-intro}{Theorem}[]
\newtheorem{cor-intro}[thm-intro]{Corollary}
\newtheorem{conj-intro}[thm-intro]{Conjecture}
\newtheorem{question-intro}[thm-intro]{Question}
\newtheorem*{meta-question*}{Meta question}

\theoremstyle{definition}

\newtheorem{obs}[thm]{Observation}

\newtheorem*{example*}{Example}

\newcommand{\abs}[1]{\ensuremath{{\lvert {#1} \rvert}}}

\begin{document}

\author[J.~P.~Gollin]{J.~Pascal Gollin}
\address{J. Pascal Gollin, Discrete Mathematics Group, Institute for Basic Science (IBS), 55 Expo-ro, Yuseong-gu, Daejeon, 
Korea, 34126}
\email{\tt pascalgollin@ibs.re.kr}
\thanks{The first author was supported by the Institute for Basic Science (IBS-R029-Y3).}

\author[A.~Jo\'{o}]{Attila Jo\'{o}}
\address{Attila Jo\'{o},  Department of Mathematics, University of Hamburg, Bundesstra{\ss}e 55 (Geomatikum), 20146 
Hamburg, Germany and Set theory and general topology research division,
    Alfr\'{e}d R\'{e}nyi Institute of Mathematics,  13-15 Re\'{a}ltanoda St., 
    Budapest, Hungary}
\email{\tt attila.joo@uni-hamburg.de}
\thanks{Funded by the Deutsche Forschungsgemeinschaft (DFG, German
Research Foundation)-513023562 and partially by NKFIH OTKA-129211}

\title[Matching variables to equations in infinite linear equation systems]{Matching variables to equations in infinite linear equation systems}

\date{November 23}

\keywords{linear equation system, matching, thin sum}
\subjclass[2020]{Primary: 15A06, 05C50 Secondary: 05C63 }
\begin{abstract}
    A fundamental result in linear algebra states that if a homogenous linear equation system has only the trivial solution, then there are at most as many variables as equations. 
    We prove the following generalisation of this phenomenon. 
    If a possibly infinite homogenous linear equation system with finitely many variables in each equation has only the trivial solution, then there exists an injection from the variables to the equations that maps each variable to an equation in which it appears. 
\end{abstract}

\maketitle

\section{Introduction}

Infinite linear equation systems appear in the most diverse areas of mathematics. 
They have a key role in boundary value problems for linear partial differential equations. 
Banach devoted them in his book~\cite{banach1987book} a whole section entitled ``Systems of linear equations in infinitely 
many unknowns''. 
In this setting, there are countably many variables and infinite sums are defined via convergence. 
Another possible approach deals with sums that are ``thin'', i.e.~that there are only finitely many non-zero summands in each. 
Such sums appear for example in horizon planning programs (see \cite{romeijn1998shadowprices}). 
More recently, these sums turned out to be fruitful in the representation theory of infinite matroids. 
For a set~$I$ and field~$\mathbb{F}$, a family~${\mathcal{F} = ( f_j \in \mathbb{F}^I \,\colon\, j \in J )}$ is called \emph{thin} if for each~${i \in I}$ there are only finitely many~${j \in J}$ with~${f_j(i) \neq 0}$. 
Infinite linear combinations of the functions~$f_j$ can be defined in a natural way. 
Indeed, if~${\lambda_j \in \mathbb{F}}$ for~${j \in J}$, then for each~${i \in I}$ the sum~${\sum_{j \in J} \lambda_j f_j(x)}$ is a well-defined element of~${\mathbb{F}}$, therefore~${\sum_{j \in J} \lambda_j f_j}$ can be considered as an element of~$\mathbb{F}^{I}$. 
If the constant~$0$ function on~$I$ is obtained only if~${\lambda_j = 0}$ for every~${j \in J}$, then~$\mathcal{F}$ is said to be \emph{thinly independent}. 
In other words, $\mathcal{F}$ is thinly independent if the (possibly infinite) homogenous linear equation system~${\sum_{j \in J} f_j(i) x_j = 0\ (i \in I)}$ has only the trivial solution. 
Investigation of the concept of thin dependence in the context of matroid theory was initiated by Bruhn and Diestel~\cite{bruhn2011infmatrgraph} and became a relatively well-understood subject after the discoveries of Afzali and Bowler~\cite{borujeni2016finitary}.  

Our main result states that for a thinly independent~$\mathcal{F}$, there is always a system of distinct representatives for the family~${\{ \mathsf{supp}(f_j) \, \colon \, j \in J \}}$ where~${\mathsf{supp}(f_j)}$ denotes the \emph{support} of~$f_j$, that is the set of those elements of~$X$ on which~$f_j$ is non-zero. 
Note that by considering arbitrary families with the the usual linear independence in~$\mathbb{F}^I$, the analogous statement fails. 
Indeed, for example the dimension of the vector space~${\mathsf{GF}(2)^{\mathbb{N}}}$ is continuum and therefore no base of it can be injectively mapped into~$\mathbb{N}$. 
Considering thin families but still the usual independence does not fix this issue. 
To demonstrate this, let us take in~$\mathsf{GF}(2)^{\mathbb{N}}$ the unit vectors together with their thin sum, the constant~$1$ vector. 
Then, no desired injection exists, although this family is linearly independent. 

Let us rephrase our main result in a more elementary way. 
A basic fact in linear algebra states that if a finite homogeneous linear equation system has only the trivial solution, then there are at most as many variables as equations. 
Naively lifting observations such as this to an infinite setting often loses some interesting structural information about the problem, since a pure comparison between cardinalities of sets is a rough measure. 
Instead, let us strengthen the fact to include more structural information. 
If a finite homogeneous linear equation system has only the trivial solution, then it is not too hard to show (using standard techniques from matching theory) that there exists an injection from the variables to the equations where each variable is mapped to an equation in which it has non-zero coefficient. 
Our main result states that this remains true for every homogeneous thin linear equation system. 

\begin{thm-intro}\label{thm: main}
    Let~$I$ and~$J$ be sets, let~$\mathbb{F}$ be a field, and let~${a_{i,j} \in \mathbb{F}}$ for~${i \in I}$ and~${j \in J}$ such that for each~${i \in I}$ there are only finitely many~${j \in J}$ with~${a_{i,j} \neq 0}$. 
    Suppose that the (possibly infinite) homogeneous linear equation system
    \begin{equation}
        \label{eq: thinLES}\tag{$\ast$}
        \sum_{j \in J} a_{i,j} x_j = 0 \quad (i \in I)
    \end{equation}
    has only the trivial solution. 
    Then there is an injection~${\varphi \colon J \to I}$ such that~${a_{\varphi(j),j} \neq 0}$ for every~${j \in J}$. 
\end{thm-intro}

In a regular matrix, one can rearrange its rows to obtain a matrix in which every entry of the diagonal is non-zero. Using the terminology of thinly independent families as before and using the Cantor-Bernstein Theorem (see Theorem~\ref{thm: cantor-bernstein}), we obtain the following generalisation of this fact as a corollary of Theorem~\ref{thm: main}. 

\begin{cor-intro}
    \label{cor: diagonal}
    Let~$I$ and~$J$ be sets, let~$\mathbb{F}$ be a field, and let~${a_{i,j} \in \mathbb{F}}$ for~${i \in I}$ and~${j \in J}$ such that the families ${( ( i \mapsto a_{i,j} ) \in \mathbb{F}^I \, \colon \, j \in J)}$ and ${( ( j \mapsto a_{i,j} ) \in \mathbb{F}^J \, \colon \, i \in I )}$ are both thinly independent. 
    Then there is a bijection~${\psi \colon J \to I}$ such that~${a_{\psi(j),j} \neq 0}$ for every~${j \in J}$. 
\end{cor-intro}

\section{Notation and Preliminaries}
\label{sec: prelims}

For the domain and range of a function~$f$ we write~${\mathsf{dom}(f)}$ and~${\mathsf{ran}(f)}$ respectively. We write ${\mathsf{ran}_I(f)}$ as an abbreviation of ${\mathsf{ran}(f)}\cap I$.
For a subset~${S \subseteq \mathsf{dom}(f)}$, we denote by~$f {\upharpoonright} S$ the \emph{restriction} of~$f$ to~$S$. 

A \emph{bipartite graph}~$G$ is a triple~$(S,T,E)$, where~$S$ and~$T$ are disjoint sets \linebreak and~${E \subseteq \{\{ s,t \} \, \colon \,  s \in S \textnormal{ and } t \in T\}}$. 
The elements of~${S \cup T}$ are the \emph{vertices} of~$G$ and the elements of~$E$ are the \emph{edges} of~$G$. 
The set containing all~${w \in S \cup T}$ for which ~${\{v,w\} \in E}$ is the \emph{neighbourhood}~${N_G(v)}$ of~$v$, and the cardinal~$\abs{N_G(v)}$ is the \emph{degree} of~$v$. 
A \emph{matching}~$M$ in~$G$ is a set of edges no two of which share a vertex. 
We say a matching~$M$ \emph{covers} a set~${X \subseteq S \cup T}$ if each vertex in~$X$ is contained in some edge in~$M$. 
A matching is \emph{perfect} if it covers~${S \cup T}$. 

Let~$I$ and~$J$ be sets and let~$\mathbb{F}$ be a field. 
We denote by~$\mathbb{F}^I$ the vector space of functions from~$I$ to~$\mathbb{F}$. 
Given an element~${b \in \mathbb{F}^I}$ and~${i \in I}$ we write~$b_i$ instead of~${b(i)}$. 
A \emph{matrix} in this paper is a function~${A \colon I \times J \to \mathbb{F}}$. 
For~${i \in I}$ and~${j \in J}$, we write~$a_{i,j}$ instead of~${A(i,j)}$. 
For a fixed~${i \in I}$, the map~${j \mapsto a_{i,j}}$ is the \emph{row} of~$A$ corresponding to~$i$ while columns are defined analogously. 
The \emph{rank}~${r(A)}$ of a finite matrix~$A$ is the dimension of the subspace of~${\mathbb{F}^{J}}$ spanned by its rows (equivalently the dimension of the subspace of~${\mathbb{F}^{I}}$ spanned by its columns). 

Let~${A \colon I \times J \to \mathbb{F}}$ be a matrix, and let~${b \in \mathbb{F}^I}$. 
We say that~$A$ is \emph{row-thin} if the support of each row of~$A$ is finite. 
If~$A$ is row-thin, then we denote by
\[
    \sum_{j \in J} a_{i,j} x_j = b_i \quad (i \in I)
\]
a \emph{thin system of linear equations} with \emph{variables}~${x_j \ (j \in J)}$. 
We may also denote this system by~${Ax = b}$. 
If~${b_i = 0}$ for all~${i \in I}$, we call the system \emph{homogeneous}. 
Note that given an element~${s \in \mathbb{F}^J}$, the sum~$\sum_{j \in J} a_{i,j} s_j$ is a well-defined element of~$\mathbb{F}$. 
A \emph{solution} for~${Ax = b}$ is an element~${\lambda \in \mathbb{F}^J}$ such that~$\sum_{j \in J} a_{i,j} \lambda_j = b_i$ for each~${i \in I}$. 
We say that~${Ax = b}$ is \emph{solvable} if it has a solution. 

If the field~$\mathbb{F}$ is finite, then standard compactness arguments show that the solvability of a thin linear equation system is equivalent with the solvability of all its finite subsystems. 
Maybe surprisingly, this remains true without any restriction on~$\mathbb{F}$. 

\begin{theorem}[Compactness of thin linear equation systems, Cowen and Emerson \cite{cowen1996complin}\footnotemark]
    \label{thm: compact LES}
    If every finite subset of the equations of a thin linear equation system is solvable, then the whole system is solvable.
\end{theorem}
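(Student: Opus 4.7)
My plan is to run a Zorn's lemma argument on \emph{partial assignments}. Call a map~${\sigma \colon J_0 \to \mathbb{F}}$ with~${J_0 \subseteq J}$ \emph{good} if every finite subsystem of the given thin linear equation system remains solvable after substituting the values~${\sigma(j)}$ for the variables~${x_j}$ with~${j \in J_0}$. The empty partial assignment is good by hypothesis. Good partial assignments, ordered by extension, form an inductive poset: along any chain, a fixed finite subsystem involves only finitely many variables and therefore agrees with a single member of the chain on those coordinates, so the union of the chain is again good. Hence Zorn's lemma produces a maximal good assignment~$\sigma^*$.

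The heart of the proof will be a one-variable extension lemma: any good~${\sigma \colon J_0 \to \mathbb{F}}$ can be extended to a good assignment on~${J_0 \cup \{j\}}$ for every~${j \in J \setminus J_0}$. Granted this, maximality forces~${\mathsf{dom}(\sigma^*) = J}$, and then applying goodness of~$\sigma^*$ to each singleton subsystem yields that every equation of the original system holds under~$\sigma^*$, producing the desired solution.

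To prove the extension lemma, fix~$j$ and, for each finite subsystem~$\Sigma$, let~${S_\Sigma \subseteq \mathbb{F}}$ denote the set of those~${c \in \mathbb{F}}$ for which~${\sigma \cup \{(j,c)\}}$ can be extended to a solution of~$\Sigma$. After substituting the values prescribed by~$\sigma$, the solutions of~$\Sigma$ form an affine subspace of the finite-dimensional space~${\mathbb{F}^F}$, where~$F$ is the finite set of variables appearing in~$\Sigma$ but not already fixed by~$\sigma$; and~$S_\Sigma$ is the projection of this affine subspace onto the~$j$-coordinate. Such a projection is itself affine, hence either equals all of~$\mathbb{F}$ or is a single point, and it is nonempty because~$\sigma$ is good. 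If every~${S_\Sigma = \mathbb{F}}$, any~$c$ works. Otherwise, pick some~$\Sigma_0$ with~${S_{\Sigma_0} = \{c_0\}}$; for any other finite subsystem~$\Sigma$, goodness of~$\sigma$ applied to~${\Sigma \cup \Sigma_0}$ gives~${\emptyset \neq S_{\Sigma \cup \Sigma_0} \subseteq S_\Sigma \cap \{c_0\}}$, which forces~${c_0 \in S_\Sigma}$. Thus~$c_0$ lies in~$\bigcap_\Sigma S_\Sigma$ and~${\sigma \cup \{(j,c_0)\}}$ is good.

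The main conceptual obstacle is precisely this one-variable extension step: the classical compactness argument for finite~$\mathbb{F}$ (Tychonoff on a compact product) says nothing useful when~$\mathbb{F}$ is infinite. What rescues the argument for arbitrary fields is the rigidity of finite-dimensional affine solution sets---every coordinate projection is either a single point or all of~$\mathbb{F}$---which collapses the potentially infinite branching in the choice of~$c$ to a clean dichotomy, resolved by the~``${\Sigma \cup \Sigma_0}$'' consistency trick. This rigidity is the only place where the thinness assumption on the rows is used in an essential way beyond guaranteeing that each finite subsystem makes sense as an ordinary finite linear system.
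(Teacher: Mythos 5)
Your proof is correct. Since the paper states Theorem~\ref{thm: compact LES} as a cited result of Cowen and Emerson (noting the rediscovery by Bruhn--Georgakopoulos and the simplification by Afzali--Bowler) without supplying its own proof, there is no in-paper argument to compare against; but your Zorn's-lemma construction of a maximal good partial assignment, combined with the one-variable extension lemma, does establish the theorem. The crucial observation---that after substituting $\sigma$ into a finite subsystem, the solution set is a nonempty finite-dimensional affine space whose projection onto the $j$-coordinate is either a single point of $\mathbb{F}$ or all of $\mathbb{F}$---is verified correctly, and the ``$\Sigma\cup\Sigma_0$'' consistency step correctly resolves the dichotomy: if some $S_{\Sigma_0}=\{c_0\}$, then $\emptyset\neq S_{\Sigma\cup\Sigma_0}\subseteq S_\Sigma\cap S_{\Sigma_0}$ forces $c_0\in S_\Sigma$ for all $\Sigma$. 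The chain step in the Zorn argument also holds, precisely because each finite subsystem touches only finitely many variables (this is where row-thinness enters), so a finite subsystem's relevant substituted values stabilise at some member of the chain. One minor imprecision: thinness is used both to make each finite subsystem a bona fide finite-dimensional system (enabling the affine rigidity) and to bound chains; these are two separate appeals to the same hypothesis, rather than a single ``essential'' use in the rigidity step as your closing remark suggests. This does not affect the validity of the argument.
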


\footnotetext{This theorem was rediscovered independently by Bruhn and Georgakopoulos~\cite{bruhn2011bases}. 
Their proof was later simplified by Afzali and Bowler {\cite[Lemma 4.2]{borujeni2015thin}}.}

To prove our main theorem, we also need a tool from infinite matching theory developed by Wojciechowski~\cite{wojciechowski1997criteria}. 
Let~${G = (S,T,E)}$ be a bipartite graph. 
A \emph{string} corresponding to~$G$ is an injective function~$f$ defined on an ordinal number with range~${\mathsf{ran}(f) \subseteq S \cup T}$. 
A string~$f$ is called \emph{saturated} if whenever~$f(\alpha) = v \in S$, then~${N_G(v) \subseteq \mathsf{ran}(f {\upharpoonright} \alpha})$. 
In other words, a vertex~${v \in S}$ can only appear in the transfinite sequence~$f$ after all of its neighbours already appeared. 
For a saturated string~$f$ with~${\mathsf{dom}(f) = \alpha}$, the quantity~${\mu_G(f) \in \mathbb{Z} \cup \{ -\infty, +\infty \}}$ is defined by transfinite recursion on~$\alpha$ as follows. 

\[
    \mu_G(f) := 
    \begin{cases} 0 &\mbox{if } \alpha=0, \\
        \liminf \{ \mu_G(f {\upharpoonright} \beta) \, \colon \, \beta<\alpha \}  & \mbox{if } \alpha \text{ is a limit ordinal},\\
        \mu_G(f {\upharpoonright} \beta)-1 & \mbox{if } \alpha=\beta+1 \text{ and } f(\beta)\in S,\\
        \mu_G(f {\upharpoonright} \beta)+1 & \mbox{if } \alpha=\beta+1 \text{ and } f(\beta)\in T, 
    \end{cases}
\]
where we use the convention that~${\pm \infty + k = \pm \infty}$ for~${k \in \mathbb{Z}}$. 
It is not too hard to prove that if~$G$ admits a matching that covers~$S$, then we must have~${\mu_G(f) \geq 0}$ for every saturated string~$f$. 
Under some assumption the reverse is also true. 

\begin{thm}[Wojciechowski {\cite[Theorem 1]{wojciechowski1997criteria}}\footnotemark]
    \label{thm: jerzy}
    Let~${G = (S,T,E)}$ be a bipartite graph in which each vertex in~$T$ has countable degree. 
    Then there is a matching in~$G$ covering~$S$ if and only if~${\mu_G(f) \geq 0}$ for every saturated string~$f$ corresponding to~$G$
\end{thm}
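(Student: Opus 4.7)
The plan is to recast the problem as a matching problem in a bipartite graph and apply Wojciechowski's theorem (Theorem~\ref{thm: jerzy}).

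\textbf{Setup.} Define the bipartite graph~${G = (J, I, E)}$ by declaring~${\{j, i\} \in E}$ if and only if~${a_{i, j} \neq 0}$. The row-thinness condition says exactly that every vertex~${i \in I}$ has finite (hence countable) degree in~$G$, so Theorem~\ref{thm: jerzy} applies. A matching of~$G$ covering~$J$ is, by definition, an injection~${\varphi \colon J \to I}$ with~${a_{\varphi(j), j} \neq 0}$ for every~${j \in J}$, so it suffices to show that~${\mu_G(f) \geq 0}$ for every saturated string~$f$ of~$G$.

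\textbf{Reducing to a string with a single deficit.} Argue by contradiction: suppose~$f$ is a saturated string with~${\mu_G(f) < 0}$. By analysing~$\mu_G$ (which drops by~$1$ at each successor $S$-step, rises by~$1$ at each successor $T$-step, and takes liminf at limits), a minimal such~$f$ satisfies~${\mathsf{dom}(f) = \gamma + 1}$, ${\mu_G(f) = -1}$, ${f(\gamma) = j^* \in J}$, and~${\mu_G(f \restriction \beta) \geq 0}$ for every~${\beta \leq \gamma}$. Set~${J_0 := J \cap \mathsf{ran}(f)}$ and~${I_0 := I \cap \mathsf{ran}(f)}$. Saturation gives~${N_G(j) \subseteq I_0}$ for every~${j \in J_0}$; consequently, every equation indexed by~${I \setminus I_0}$ is supported on variables in~${J \setminus J_0}$.

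\textbf{Producing a non-trivial solution.} Setting~${x_j = 0}$ for~${j \in J \setminus J_0}$ reduces the task of producing a non-trivial solution of~$(\ast)$ to producing a non-trivial solution of the subsystem
\begin{equation*}
    \sum_{j \in J_0} a_{i,j} x_j = 0 \quad (i \in I_0).
\end{equation*}
I would apply Theorem~\ref{thm: compact LES} to the thin inhomogeneous system obtained by appending~${x_{j^*} = 1}$ to this subsystem. By compactness, solvability reduces to verifying that for every finite~${I'' \subseteq I_0}$, the finite inhomogeneous system
\begin{equation*}
    \sum_{j \in J_0 \setminus \{j^*\}} a_{i,j} x_j = -a_{i, j^*} \quad (i \in I'')
\end{equation*}
is solvable. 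I would verify this by choosing~${\beta \leq \gamma}$ with~${I'' \subseteq \mathsf{ran}(f \restriction \beta)}$ (possible because~$I''$ is finite and~${I'' \subseteq I_0}$) and applying the finite version of the main theorem (standard finite linear algebra combined with Hall's theorem) to the finite submatrix determined by~${\mathsf{ran}(f \restriction \beta)}$. The inequality~${\mu_G(f \restriction \beta) \geq 0}$, the fact that~$j^*$ occupies the very last position of~$f$, and saturation should together provide the finite-dimensional Hall-type condition that drives the finite theorem and yields the required solution, with~$x_{j^*}$ playing the role of the free unmatched variable created by the deficit at the last step.

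\textbf{Main obstacle.} The crux of the proof is this finite lemma: translating the combinatorial inequality~${\mu_G(f \restriction \beta) \geq 0}$ on initial segments of~$f$, together with the saturation property and the special position of~$j^*$, into concrete solvability of the finite inhomogeneous subsystem uniformly in~${I''}$. Once this finite lemma is established, Theorem~\ref{thm: compact LES} assembles the finite solutions into a global non-trivial solution of the subsystem, which (via the reduction above) extends to a non-trivial solution of~$(\ast)$, contradicting the hypothesis.
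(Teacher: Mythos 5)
The statement you were asked to prove is Theorem~\ref{thm: jerzy} itself, i.e.\ Wojciechowski's matchability criterion. Your proposal does not prove it: its very first step is to ``apply Wojciechowski's theorem (Theorem~\ref{thm: jerzy})'', so as a proof of this statement it is circular. What you have actually sketched is a proof of the paper's main result, Theorem~\ref{thm: main}, taking Theorem~\ref{thm: jerzy} as a black box --- which is exactly what the paper does too. The paper contains no proof of Theorem~\ref{thm: jerzy}; it is quoted from \cite{wojciechowski1997criteria}, and the nontrivial direction (that ${\mu_G(f) \geq 0}$ for all saturated strings forces a matching covering~$S$) is a substantial result of infinite matching theory in the line of Nash-Williams and Aharoni--Nash-Williams--Shelah. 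Nothing in your sketch engages with it.

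Judged instead as a proof of Theorem~\ref{thm: main}, your outline follows the paper's strategy (same bipartite graph, same reduction via Theorem~\ref{thm: jerzy} to a minimal saturated string with a single deficit ending at some~${j^* \in J}$), but it leaves the crux unproven, as you yourself flag. Two concrete problems with the step you defer: first, ${\mathsf{ran}(f {\upharpoonright} \beta)}$ is in general infinite, so there is no ``finite submatrix determined by~${\mathsf{ran}(f {\upharpoonright} \beta)}$'' to which a finite Hall-type theorem could be applied; second, converting the ordinal-indexed bookkeeping ${\mu_G(f {\upharpoonright} \beta) \geq 0}$ into linear-algebraic information about finite subsystems is precisely the content of the paper's Lemma~\ref{lem: main}, which is proved by a transfinite induction establishing ${\mu(f) = \abs{I'} - r(A {\upharpoonright} (I' \times J'))}$ for suitable finite~${I' , J'}$, and which itself invokes Theorem~\ref{thm: compact LES} in the successor step where~${f(\beta) \in J}$. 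Without that induction (or an equivalent), your compactness argument has no finite solvability statements to assemble, so the gap you identify as the ``main obstacle'' is exactly the missing proof.
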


\footnotetext{The criterion given in Theorem~\ref{thm: jerzy} is called $\mu$\nobreakdash-admissibility and was inspired by the $q$\nobreakdash-admissibility criterion of Nash-Williams (see \cite{nash1978another}). 
A characterisation of matchability for arbitrary bipartite graphs was discovered by Aharoni, Nash-Williams and Shelah~\cite{aharoni1983general}. 
For a survey on infinite matching theory (including the non-bipartite case) we refer to~\cite{aharoni1991infinite}.}

To obtain Corollary~\ref{cor: diagonal}, we now state the well-known theorem of Cantor and Bernstein in a stronger, graph-theoretic form. 

\begin{thm}[Cantor-Bernstein \cite{cantor1987}]
    \label{thm: cantor-bernstein}
    If ${G = (S,T,E)}$ is a bipartite graph and there exist a matching~$M_S$ that covers~$S$ as well as a matching~$M_T$ that covers~$T$, then~$G$ admits a perfect matching. 
\end{thm}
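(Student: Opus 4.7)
The plan is to construct the desired perfect matching directly from the combinatorial structure of $M_S \cup M_T$. Consider the subgraph $H$ of $G$ with vertex set $S \cup T$ and edge set $M_S \cup M_T$. Since $M_S$ and $M_T$ are each matchings, every vertex has at most one edge from each, so $H$ has maximum degree at most $2$. Moreover, every vertex lies in at least one edge of $H$: vertices in $S$ are covered by $M_S$ and vertices in $T$ by $M_T$. Consequently, each connected component of $H$ is an isolated edge, a finite path of length at least two, a one-way infinite ray, a two-way infinite double ray, or a cycle.

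I will produce a sub-matching covering the vertices of each component and then take the union over all components. Isolated edges I keep as they are. A cycle in $H$ is bipartite and hence of even length, so alternating around the cycle yields a perfect matching of its vertex set. A double ray admits the same alternating argument. For a ray with endpoint $v$, the unique edge of $H$ incident with $v$ must lie in $M_S$ if $v \in S$ and in $M_T$ if $v \in T$, so that $v$ is covered; alternation along the ray then produces a perfect matching of its vertex set.

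The main obstacle is to rule out the troublesome case of a finite path $v_0 v_1 \cdots v_n$ of length $n \geq 2$ in $H$, since such a path is in general not perfectly matchable (one could have both endpoints in the same side of the bipartition). I claim no such component exists. Indeed, each interior vertex has degree exactly $2$ in $H$, forcing its two incident edges to come one from $M_S$ and one from $M_T$, so the edges strictly alternate along the path. Each endpoint lies in a single edge of $H$, which (by the same reasoning as for rays) must come from the matching covering that endpoint's side. A straightforward parity check, comparing the forced side of the final edge $v_{n-1} v_n$ against the bipartite parity of $v_n$ relative to $v_0$, then yields a contradiction in every case.

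Once this obstruction is eliminated, taking the union of the chosen sub-matchings over all components gives a matching of $G$ covering every vertex, i.e.~a perfect matching. The parity argument is the only step of real substance; the rest is bookkeeping on the component structure of a graph of maximum degree $2$.
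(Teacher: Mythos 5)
The paper does not actually prove Theorem~\ref{thm: cantor-bernstein}; it cites it as the well-known graph-theoretic form of the Cantor--Bernstein theorem, so there is no ``paper's own proof'' to compare against. Your argument is a correct, self-contained proof, and it is essentially the classical one (going back to the K\H{o}nig-style proof of Cantor--Bernstein): examine the components of $M_S \cup M_T$, which have maximum degree two and minimum degree one, classify them as isolated edges, finite paths, rays, double rays, or cycles, and show that each admits a perfect matching of its vertex set except for the finite paths of length at least two, which cannot occur.

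Your one hand-waved step, the ``straightforward parity check,'' does go through, but it is worth spelling out since it is the heart of the proof. On a finite path $v_0 \cdots v_n$ with $n \geq 2$, every edge $e_k = \{v_{k-1},v_k\}$ touches an interior vertex, which forces $e_k$ to lie in exactly one of $M_S$, $M_T$ and forces strict alternation. The endpoint $v_0$, having degree one, is covered only by $e_1$, so $e_1$ lies in $M_S$ or $M_T$ according to whether $v_0 \in S$ or $v_0 \in T$; hence the alternation pattern is pinned down by the side of $v_0$. Applying the same reasoning to $v_n$ pins down the membership of $e_n$ by the side of $v_n$. Bipartiteness forces $v_n$ to lie on the same side as $v_0$ exactly when $n$ is even, while alternation forces $e_n$ to lie in the same matching as $e_1$ exactly when $n$ is odd; these two constraints are incompatible whichever side $v_0$ belongs to, giving the contradiction. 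One further small point worth making explicit: an edge of $M_S \cap M_T$ necessarily forms an isolated edge of $H$ (both of its endpoints already have their $M_S$- and $M_T$-edge equal to it), so the common edges cause no trouble and are absorbed into your isolated-edge case.
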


\section{Proof of the main results}

Let us fix a homogeneous thin linear equation system~${Ax = 0}$, where~${A \colon I \times J \to \mathbb{F}}$, 
that admits only the trivial solution. 
Without loss of generality, we may assume that~$I$ and~$J$ are disjoint. 
We define a bipartite graph~$G_A = (J,I,E)$ where~${\{i,j\} \in E}$ for~${i \in I}$ and~${j \in J}$ if and only if~$a_{i,j}$ is non-zero. 
We will simply write~$\mu$ instead of~$\mu_{G_A}$. 
Moreover, when we refer to a saturated string we will always mean a saturated string with respect to~$G_A$. 

\begin{obs}
    \label{obs: saturated}
    If a string~$f$ is saturated, then for~${i \in I \setminus (\mathsf{ran}_I(f)})$ and~${j \in \mathsf{ran}_J(f)}$, we have~${a_{i,j} = 0}$. 
    In other words, the matrix~${A {\upharpoonright} (I \times (\mathsf{ran}_J(f)))}$ is obtained from the matrix~${A {\upharpoonright} ((\mathsf{ran}_I(f)) \times (\mathsf{ran}_J(f)))}$ by extending the columns by zeroes. 
\end{obs}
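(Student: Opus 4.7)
The plan is to unfold the definition of saturation applied to the bipartite graph $G_A = (J, I, E)$. In that definition, the distinguished ``side'' $S$ is played by $J$, so the saturation condition controls vertices from $J$: whenever $f(\alpha) = j \in J$, all $G_A$-neighbours of $j$ appear strictly earlier in the string, i.e.\ in $\mathsf{ran}(f \upharpoonright \alpha)$. The key point to observe is that, by the very definition of $G_A$, the neighbourhood $N_{G_A}(j)$ is precisely $\{ i \in I \setsep a_{i,j} \neq 0 \}$. This is enough to force the entries $a_{i,j}$ for $j \in \mathsf{ran}_J(f)$ and $i \notin \mathsf{ran}_I(f)$ to vanish.

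Concretely, I would first fix an arbitrary $j \in \mathsf{ran}_J(f)$ and write $j = f(\alpha)$ for the unique ordinal $\alpha \in \mathsf{dom}(f)$ with this property. Since $f$ is saturated and $j \in J$, the definition gives $N_{G_A}(j) \subseteq \mathsf{ran}(f \upharpoonright \alpha) \subseteq \mathsf{ran}(f)$. Combining this with $N_{G_A}(j) \subseteq I$ yields $N_{G_A}(j) \subseteq \mathsf{ran}_I(f)$. Translating back through the definition of $G_A$, every $i \in I$ with $a_{i,j} \neq 0$ lies in $\mathsf{ran}_I(f)$; equivalently, for $i \in I \setminus \mathsf{ran}_I(f)$ we have $a_{i,j} = 0$, which is the first assertion. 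The matrix reformulation is then just a rephrasing: the rows of $A {\upharpoonright} (I \times \mathsf{ran}_J(f))$ indexed by $I \setminus \mathsf{ran}_I(f)$ are identically zero, so this matrix is obtained from $A {\upharpoonright} (\mathsf{ran}_I(f) \times \mathsf{ran}_J(f))$ by extending each column by zero entries on $I \setminus \mathsf{ran}_I(f)$.

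As this is purely an unwinding of definitions, there is no substantive obstacle to overcome; the only subtle point is keeping straight which side of the bipartition plays the role of $S$ in the saturation condition, and this is determined by the convention $G_A = (J, I, E)$ fixed at the start of the section. The observation itself is clearly intended as a structural tool for the subsequent proof of Theorem~\ref{thm: main}, where it will let one pass freely between the full matrix $A$ and the submatrix indexed by $\mathsf{ran}_I(f) \times \mathsf{ran}_J(f)$.
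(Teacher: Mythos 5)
Your proof is correct and is exactly the intended unwinding of the definitions; the paper states this as an observation without proof, and your argument—identifying $S=J$ in the saturation condition, noting $N_{G_A}(j)=\{i\in I\setsep a_{i,j}\neq 0\}$, and tracing $j=f(\alpha)$ through the definition—is precisely what is being taken for granted there.
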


\begin{lemma}
    \label{lem: main}
    If~$f$ is a saturated string such that~$\mu$ takes non-negative finite values on all proper initial segments of~$f$, 
    then for every finite~${I_0 \subseteq \mathsf{ran}_I(f)}$, there is a finite~${I' \subseteq \mathsf{ran}_I(f)}$ extending~${I_0}$ and a finite~${J' \subseteq \mathsf{ran}_J(f)}$ such that~${\mu(f) = \abs{I'} - r(A {\upharpoonright} (I' \times J'))}$.  
\end{lemma}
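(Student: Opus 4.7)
I shall prove the lemma by transfinite induction on the length $\alpha := \mathsf{dom}(f)$ of the string $f$. The inductive hypothesis will be the full statement of the lemma applied to proper initial segments $f{\upharpoonright}\beta$ for $\beta < \alpha$; throughout, I think of the quantity $|I'| - r(A{\upharpoonright}(I'\times J'))$ as a non-negative ``rank-deficiency'' of the submatrix which must be steered to exactly the value $\mu(f)$.

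The base case $\alpha = 0$ is immediate, since $I_0 = \emptyset$ is forced and $I' = J' = \emptyset$ gives $\mu(f) = 0$. For a limit $\alpha$ I shall use that the $\mu$-values on proper initial segments are non-negative integers and $\mu(f) = \liminf_{\beta<\alpha}\mu(f{\upharpoonright}\beta)$: the integer-valued liminf is attained cofinally in $\alpha$, so I can pick some $\beta<\alpha$ with $\mu(f{\upharpoonright}\beta) = \mu(f)$ that is also large enough that the finite set $I_0$ is contained in $\mathsf{ran}_I(f{\upharpoonright}\beta)$ (possible because $I_0$ is finite and $\mathsf{ran}_I(f) = \bigcup_{\gamma<\alpha}\mathsf{ran}_I(f{\upharpoonright}\gamma)$). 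Applying the inductive hypothesis to $f{\upharpoonright}\beta$ with input $I_0$ then directly provides the required $I'$ and $J'$.

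For a successor $\alpha = \beta+1$ I first apply the inductive hypothesis to $f{\upharpoonright}\beta$ with input $I_0\smallsetminus\{f(\beta)\}$, producing finite $I''$ and $J''$ with $|I''| - r(A{\upharpoonright}(I''\times J'')) = \mu(f{\upharpoonright}\beta)$. If $f(\beta)\in I$, so that $\mu(f) = \mu(f{\upharpoonright}\beta) + 1$, I set $I' := I''\cup\{f(\beta)\}$: if adjoining the new row leaves the rank over $J''$ unchanged, I take $J' := J''$; otherwise the rank climbs by $1$, and I discard from $J''$ a column belonging to a column-basis of $A{\upharpoonright}(I'\times J'')$ (such a column exists because the rank is then at least $1$) to drop the rank back by $1$. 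If $f(\beta)\in J$, so that $\mu(f) = \mu(f{\upharpoonright}\beta) - 1$, I set $J' := J''\cup\{f(\beta)\}$: if the new column is linearly independent of the $J''$-columns on $I''$ then $I' := I''$ already works; otherwise I enlarge $I''$ within $\mathsf{ran}_I(f{\upharpoonright}\beta)$ to a finite $I'$ that separates the column $f(\beta)$ from the $J''$-column span. Such an $I'$ exists because by Observation~\ref{obs: saturated} all the columns indexed by $\mathsf{ran}_J(f{\upharpoonright}\beta)\cup\{f(\beta)\}$ are supported in $\mathsf{ran}_I(f{\upharpoonright}\beta)$, and together they form a finite linearly independent family (being a finite subfamily of the thinly independent family of all columns of $A$).

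The hard part, and the main obstacle I foresee, is the bad half of the $f(\beta) \in J$ subcase: during the enlargement of $I''$ to the witness $I'$, every extra row either raises the rank of the submatrix over $J''$ (leaving the deficiency unchanged) or not (raising the deficiency by $1$), and these contributions must be balanced against the final rank drop of $1$ from the now-linearly-independent column $f(\beta)$ so that the total deficiency lands exactly on $\mu(f)$. Closing this bookkeeping — perhaps by reapplying the inductive hypothesis with an enlarged input set or by a careful exchange of rows — is what uses in an essential way the thin-independence of the columns of $A$ (which is precisely the assumption that $Ax = 0$ has only the trivial solution) together with the support restriction of Observation~\ref{obs: saturated}.
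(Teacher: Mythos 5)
Your handling of the base case, the limit case, and the ${f(\beta) \in I}$ successor case matches the paper in substance (in the ${f(\beta) \in I}$ case your fall-back branch of discarding a column is in fact vacuous: by Observation~\ref{obs: saturated} the new row $f(\beta)$ is identically zero on ${J'' \subseteq \mathsf{ran}_J(f{\upharpoonright}\beta)}$, so the rank never climbs and no column need be removed). The genuine problem is the ${f(\beta)\in J}$ subcase, and you correctly identify it yourself: after applying the inductive hypothesis to get $I''$ and $J''$, you want to enlarge $I''$ to an $I'$ on which the column $j_0$ becomes independent of the $J''$-columns, but each extra row either raises the rank over $J''$ (deficiency unchanged) or not (deficiency up by one), and nothing you have set up forces the total to land on ${\mu(f{\upharpoonright}\beta)-1}$. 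This is not a bookkeeping nuisance but the crux of the lemma, and the proposal stops at ``perhaps reapply the induction with an enlarged input set'' without closing the circle.

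The paper resolves exactly this by reversing the order of quantifiers: it determines a separating finite row set ${I_1 \subseteq \mathsf{ran}_I(f{\upharpoonright}\beta)}$ \emph{before} invoking the inductive hypothesis, and then applies the hypothesis to ${I_0 \cup I_1}$, so that ${I^* \supseteq I_1}$ is built in and the deficiency identity ${\abs{I^*}-r(A{\upharpoonright}(I^*\times J^*))=\mu(f{\upharpoonright}\beta)}$ is never disturbed afterwards. Crucially, $I_1$ must separate the column $j_0$ from the span of \emph{all} columns indexed by ${\mathsf{ran}_J(f{\upharpoonright}\beta)}$ (not merely from the particular $J''$ the induction happens to return, which would be circular). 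This is where Theorem~\ref{thm: compact LES} enters: the thin system ${\sum_{j\in\mathsf{ran}_J(f{\upharpoonright}\beta)}a_{i,j}x_j=a_{i,j_0}}$ over ${i\in\mathsf{ran}_I(f{\upharpoonright}\beta)}$ is unsolvable (else ${Ax=0}$ would have a non-trivial solution, using Observation~\ref{obs: saturated} to dispose of the rows outside ${\mathsf{ran}_I(f{\upharpoonright}\beta)}$), and compactness extracts the finite unsolvable subsystem indexed by $I_1$. Your proposal replaces this with the ordinary linear independence of the finite family ${J''\cup\{j_0\}}$, which yields a separating set depending on $J''$ and therefore cannot be fed back into the induction; without the compactness step the argument does not go through.
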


\begin{proof}
    We apply transfinite induction on~${\mathsf{dom}(f) =: \alpha}$. 
    
    If~${\alpha = 0}$, we must have~${I_0 = \emptyset}$ and we can only take~${I' := J' := \emptyset}$. 
    This is appropriate because~${\mu(\emptyset) = \abs{\emptyset} = r(\emptyset) = 0}$. 
    
    If~$\alpha$ is a limit ordinal, then~${U := \{ \beta < \alpha \, \colon \, \mu(f {\upharpoonright} \beta) = \mu(f) \}}$ is unbounded in~$\alpha$ by the definition of~$\mu$. 
    Let a finite set~${I_0 \subseteq \mathsf{ran}_I(f)}$ be given and let~${\beta \in U}$ large enough to satisfy~${I_0 \subseteq \mathsf{ran}(f {\upharpoonright} \beta)}$. 
    By induction, 
    we obtain a finite set~${I' \subseteq \mathsf{ran}_{I}(f {\upharpoonright} \beta)}$ extending~$I_0$ and a finite set~${J' \subseteq \mathsf{ran}_J(f {\upharpoonright} \beta)}$ such that~${\mu(f {\upharpoonright} \beta) = \abs{I'} - r(A {\upharpoonright} (I' \times J'))}$. 
    Since ${\mathsf{ran}(f {\upharpoonright} \beta) \subseteq \mathsf{ran}(f)}$, 
    these~$I'$ and~$J'$ are as desired. 
    
    Finally, assume that~${\alpha = \beta+1}$ and let a finite~${I_0 \subseteq \mathsf{ran}_I(f)}$ be given. 
    Suppose first that~${i := f(\beta) \in I}$. 
    We may assume without loss of generality that~${i \in I_0}$. 
    By applying the induction hypotheses for~${f {\upharpoonright} \beta}$ and~${I_0 \setminus \{ i \}}$, 
    we can pick finite 
    sets~${I^{*} \subseteq \mathsf{ran}_I(f {\upharpoonright} \beta)}$ 
    and~${J^{*} \subseteq  \mathsf{ran}_J(f {\upharpoonright} \beta)}$  
    with~${I^{*} \supseteq I_0 \setminus \{ i \}}$ 
    and~${\mu(f {\upharpoonright} \beta) = \abs{I^{*}} - r(A {\upharpoonright} (I^{*} \times J^{*}))}$. 
    On the one hand, ${\abs{I^{*} \cup \{ i \}} = \abs{I^{*}} + 1}$ since~${i \notin I^{*}}$ and~${\mu(f) = \mu(f {\upharpoonright} \beta) + 1}$ since~${f(\beta) = i \in I}$. 
    On the other hand, ${r(A {\upharpoonright} (I^{*} \times J^{*})) = r(A {\upharpoonright} ((I^{*} \cup \{ i \})\times J^{*}))}$ 
    because Observation~\ref{obs: saturated} ensures that each column is extended only by a new~$0$ coordinate. 
    By combining these, we conclude that~${\mu(f) = \abs{I^{*} \cup \{ i \}} - r(A {\upharpoonright} ((I^{*} \cup \{ i \}) \times J^{*}))}$, thus~${I' := I^{*} \cup \{ i \}}$ and~${J' := J^{*}}$ are appropriate. 
    Now we suppose that~${j_0 := f(\beta) \in J}$. 
    The thin linear equation system
    \[ 
        \sum_{j \in \mathsf{ran}_J(f {\upharpoonright} \beta)} a_{i,j} x_j = a_{i, j_0} \quad (i \in I)
    \]
    has no solution since a solution would yield a non-trivial solution of~(\ref{eq: thinLES}). 
    Note that for~${i \in I \setminus \mathsf{ran}(f {\upharpoonright} \beta)}$, the equations above are trivial (i.e.~all coefficients and the right side are zeroes) by Observation~\ref{obs: saturated}. 
    Therefore the subsystem of the equations corresponding the indices in~${\mathsf{ran}_I(f {\upharpoonright} \beta)}$ is unsolvable. 
    By Theorem~\ref{thm: compact LES}, there is already a finite~${I_1 \subseteq \mathsf{ran}_I(f {\upharpoonright} \beta)}$ such that the corresponding subsystem is unsolvable. 
    Now we apply the induction hypothesis for~${f {\upharpoonright} \beta}$ and~${I_0 \cup I_1}$ to pick finite sets~${I^{*} \subseteq \mathsf{ran}_I(f {\upharpoonright} \beta)}$ and~${J^{*} \subseteq  \mathsf{ran}_J(f {\upharpoonright} \beta)}$  with~${I^{*} \supseteq 
    I_0 \cup I_1}$ and~${\mu(f {\upharpoonright} \beta) = \abs{I^{*}} - r(A {\upharpoonright} (I^{*} \times J^{*}))}$. 
    On one hand,~${\mu(f) = \mu(f {\upharpoonright} \beta) - 1}$ since~${f(\beta) = j_0 \in J}$. 
    On the other hand, ${r(A {\upharpoonright} (I^{*} \times (J^{*} \cup \{ j_0 \}))) = r(A {\upharpoonright} (I^{*} \times J^{*})) + 1}$ because the new column is not spanned by the old ones because~${I_1 \subseteq I^{*}}$. 
    By combining these, we conclude that~${\mu(f) = \abs{I^{*}} - r(A {\upharpoonright} (I^{*} \times (J^{*} \cup \{ j_0 \})))}$, thus~${I' := I^{*}}$ and~${J' := J^{*} \cup \{ j_0 \}}$ are appropriate, which completes the proof. 
\end{proof}

Let us restate our main theorem using the notation from Section~\ref{sec: prelims}. 

\setcounter{thm-intro}{0}
\begin{thm-intro}
    Let~$I$ and~$J$ be sets, let~$\mathbb{F}$ be a field, and let~$A \colon I \times J \to \mathbb{F}$ be a row-thin matrix. 
    If the homogeneous thin linear equation system~${Ax = 0}$ has only the trivial solution, 
    then there is an injection~${\varphi \colon J \to I}$ such that~${a_{\varphi(j),j} \neq 0}$ for every~${j \in J}$. 
\end{thm-intro}

\begin{proof}
    Suppose for a contradiction that the desired injection does not exist, i.e.~there is no matching in~$G_A$ that covers~$J$. 
    By Theorem~\ref{thm: jerzy}, we can pick a saturated string~$f$ with~${\mu(f) < 0}$. 
    By replacing~$f$ with an initial segment of itself if necessary, we can assume that~$\mu$ takes 
    only non-negative finite values on the proper initial segments of~$f$. 
    According to Lemma~\ref{lem: main}, we have~${\mu(f) = \abs{I'} - r(A {\upharpoonright} (I'\times J'))}$ for some finite~${I' \subseteq I}$ and~${J' \subseteq J}$. 
    Since the rank of a matrix is at most the number of rows this leads to~${\mu(f) \geq 0}$, contradicting the choice of~$f$. 
\end{proof}

Let us now prove Corollary~\ref{cor: diagonal}, which we restate using the terminology from Section~\ref{sec: prelims}. 

\begin{cor-intro}
    Let~$I$ and~$J$ be sets, let~$\mathbb{F}$ be a field, and let~${A \colon I \times J \to \mathbb{F}}$ such that the family of rows and the family of columns are both thinly independent. 
    Then there is a bijection~${\psi \colon J \to I}$ such that~${a_{\psi(j),j} \neq 0}$ for every~${j \in J}$. 
\end{cor-intro}

\begin{proof}
    Consider the graph~$G_A$ as above. 
    By Theorem~\ref{thm: main}, 
    since the family of rows is thinly independent there is matching that covers~$I$, and since the family of columns is thinly independent, there is a matching that covers~$J$. 
    By Theorem~\ref{thm: cantor-bernstein}, $G_A$ admits a perfect matching~$M$. 
    Setting~$\psi(j)$ to be the unique~${i \in I}$ for which~${\{i,j\} \in M}$ completes the proof. 
\end{proof}

\printbibliography

\end{document}